\newcommand{\overlineh}{\overline}
\newtheorem{theorem}{Theorem}
\def\r{{\mathbb R}}
\def\d{{\mathrm d}}
\def\i{{\mathrm i}}
\newcommand{\p}{\mathbb{P}}
\newcommand{\e}{\mathbb{E}}
\newcommand{\R}{\mathbb{R}}
\renewcommand{\epsilon}{\varepsilon}
\begin{document}
\begin{frontmatter}

\title{A Wiener--Hopf Monte Carlo simulation technique for L\'evy processes}
\runtitle{A Wiener--Hopf Monte Carlo simulation technique}

\begin{aug}
\author[A]{\fnms{A.} \snm{Kuznetsov}\thanksref{th3}\ead[label=e1]{kuznetsov@mathstat.yorku.ca}},
\author[B]{\fnms{A. E.} \snm{Kyprianou}\corref{}\thanksref{th1,th2}\ead[label=e2]{a.kyprianou@bath.ac.uk}},
\author[C]{\fnms{J. C.} \snm{Pardo}\thanksref{th1}\ead[label=e3]{jcpardo@cimat.mx}}\\
\and
\author[B]{\fnms{K.} \snm{van Schaik}\thanksref{th2}\ead[label=e4]{k.van.schaik@bath.ac.uk}}
\runauthor{Kuznetsov, Kyprianou, Pardo and van Schaik}
\affiliation{York University, University of Bath, Centro de
Investigaci\'on en Matem\'aticas and~University of Bath}
\address[A]{A. Kuznetsov\\
Department of Mathematics and Statistics\\
York University\\
Toronto, Ontario, M3J 1P3\\
Canada\\
\printead{e1}} 
\address[B]{A. E. Kyprianou\\
K. van Schaik\\
Department of Mathematical Sciences\\
University of Bath\\
Claverton Down, Bath, BA2 7AY\\
United Kingdom\\
\printead{e2}\\
\hphantom{\textsc{E-mail:} }\printead*{e4}}
\address[C]{J. C. Pardo\\
Centro de Investigaci\'on en Matem\'aticas\\
A.C. Calle Jalisco s/n.\\
36240 Guanajuato\\
M\'exico\\
\printead{e3}}
\end{aug}
\thankstext{th3}{Supported by the
Natural Sciences and Engineering Research Council of Canada.}
\thankstext{th1}{Supported by
EPSRC Grant  EP/D045460/1.}
\thankstext{th2}{Supported by the AXA Research Fund.}

\received{\smonth{12} \syear{2009}}
\revised{\smonth{10} \syear{2010}}

%
\begin{abstract}
We develop a completely new and straightforward method for simulating
the joint
law of the position and running maximum at a~fixed time of a general L\'
evy process
with a view to application in insurance and financial mathematics.
Although different,
our method takes lessons from Carr's so-called ``Canadization'' technique
as well as Doney's
method of stochastic bounds for L\'evy processes; see Carr  [\textit{Rev. Fin. Studies}
\textbf{11} (1998) 597--626]
and Doney  [\textit{Ann. Probab.}
\textbf{32} (2004) 1545--1552]. We rely fundamentally on the Wiener--Hopf
decomposition for L\'evy processes as well as taking advantage of
recent developments in factorization techniques of the latter theory
due to Vigon  [Simplifiez vos L\'evy en titillant la
factorization de
Wiener--Hopf (2002) Laboratoire de Math\'ematiques de L'INSA de Rouen] and Kuznetsov [\textit{Ann. Appl. Probab.}
\textbf{20} (2010) 1801--1830]. We illustrate our
Wiener--Hopf Monte Carlo method on a number of different processes,
including a new family of L\'evy processes called hypergeometric L\'evy
processes. Moreover, we illustrate the robustness of working with a
Wiener--Hopf decomposition with two extensions. The first extension
shows that if one can successfully simulate for a given L\'evy
processes then one can successfully simulate for any independent sum of
the latter process and a compound Poisson process. The second extension
illustrates how one may produce a
straightforward approximation for simulating the two-sided exit problem.
\end{abstract}

%
\begin{keyword}[class=AMS]
\kwd{65C05}
\kwd{68U20}.
\end{keyword}
\begin{keyword}
\kwd{L\'evy processes}
\kwd{exotic option pricing}
\kwd{Wiener--Hopf factorization}.
\end{keyword}

\vspace*{-6pt}
\end{frontmatter}

\section{Introduction}
Let us suppose that $X=\{X_{t}\dvtx t\geq0\}$ is a general L\'{e}vy process
with law $\mathbb{P}$ and L\'evy measure $\Pi$.
That is to say, $X$ is a Markov process with paths that are right
continuous with left limits such that\vadjust{\goodbreak} the increments are stationary
and independent and whose characteristic function at each time $t$ is
given by the L\'evy--Khinchine representation
\begin{equation}\label{eq1}
\mathbb{E}[e^{\mathrm{i}\theta X_t}]=e^{-t\Psi(\theta)} ,\qquad\theta
\in\mathbb{R} ,
\end{equation}
where
%
\begin{equation}\label{lrep}
\Psi(\theta) =
\mathrm{i}\theta a + \frac{1}{2}\sigma^2\theta^2+
\int_{\r}\bigl(1-e^{\mathrm{i}\theta x}+
\mathrm{i}\theta x \mathbf{1}_{\{|x|<1\}}\bigr)\Pi(\mathrm{d}x) .
\end{equation}
We have $a\in\mathbb{R}$, $\sigma^2\ge0$ and
$\Pi$ is a measure supported on $\mathbb{R}$
with $\Pi(\{0\})= 0$ and
$\int_{\mathbb{R}}(x^2\wedge1) \Pi(\mathrm{d}x)<\infty$.
Starting with the early work of Madan and Seneta~\cite{MD}, L\'evy
processes have played a central role in the theory of financial
mathematics and statistics (see, e.g., the books \cite{BL,CT,S,SC}). More recently, they have been extensively used in modern
insurance risk theory
(see, e.g., Kl\"uppelberg, Kyprianou and Maller \cite{KKM}, Song and Vondra\v
{c}ek~\cite{SV}). The basic idea in financial mathematics and
statistics is that the logarithm of the stock price or risky asset follows the
dynamics of a L\'evy process whilst in insurance mathematics, it is the
L\'evy process itself which models the surplus wealth of an insurance
company until ruin. There are also extensive applications of L\'evy
processes in queuing theory, genetics and mathematical biology as well
as through their appearance in the theory of stochastic differential equations.

In both financial and insurance settings, a key quantity of generic
interest is the joint law of the current position and the running
maximum of a L\'evy process at a fixed time if not the individual
marginals associated with the latter bivarite law. Consider the
following example. If we define $\overline{X}_t = \sup_{s\leq t}X_s$,
then the pricing of barrier options boils down to evaluating
expectations of the form $
\mathbb{E}[f(x+X_t) \mathbf{1}_{\{x+\overline{X}_t >b\}}]
$
for some appropriate function $f(x)$ and threshold $b>0$. Indeed if
$f(x) = (K-e^x)^+$ then the latter expectation is related to the value
of an ``up-and-in'' put. In credit risk, one is predominantly interested
in the quantity $\widehat{\mathbb{P}}(\overline{X}_t <x)$ as a
function in $x$ and~$t$, where $\widehat{\mathbb{P}}$ is the law of
the dual process $-X$. Indeed it is as a functional of the latter
probabilities that the price of a credit default swap is computed; see,
for example, the recent book of Schoutens and Cariboni \cite{SC}. One
is similarly interested in $\widehat{\mathbb{P}}(\overline{X}_t \geq
x)$ in ruin theory as these probabilities are also equivalent to the
finite-time ruin probabilities.

One obvious way to do Monte Carlo simulation of expectations involving
the joint law of $(X_t, \overline{X}_t)$ that takes advantage of the
stationary and independent increments of L\'evy processes is to take a
random walk approximation to the L\'evy process, simulate multiple
paths, taking care to record the maximum for each run. When one is able
to set things up in this way so that one samples exactly from the
distribution of $X_t$, the law of the maximum of the underlying random
walk will not agree with the law of $\overline{X}_t$.

Taking account of the fact that all L\'evy processes respect a
fundamental path decomposition known as the Wiener--Hopf\vadjust{\goodbreak} factorization,
it turns out there is another very straightforward way to perform
Monte Carlo simulations for expectations involving the joint law of
$(X_t, \overline{X}_t)$ which we introduce in this paper. Our method
allows for exact sampling from the law of $(X_{\mathbf{g}}, \overline
{X}_{\mathbf{g}})$ where $\mathbf g$ is
a random time whose distribution can be concentrated arbitrarily close
around $t$.

There are several advantages of the technique. First, when it is
taken in context with very recent developments in Wiener--Hopf theory
for L\'evy processes, for example, recent advances in the theory of
scale functions for spectrally negative processes (see Kyprianou, Pardo
and Rivero
\cite{KPR}), new complex analytical techniques due to Kuznetsov \cite
{Kuz} and Vigon's theory of philanthropy (see \cite{V}), one may
quickly progress the algorithm to quite straightforward numerical work.
Second, our Wiener--Hopf method takes advantage of a similar feature
found in the, now classical, ``Canadization'' method of Carr \cite{Carr}
for numerical evaluation of optimal stopping problems. The latter is
generally acknowledged as being more efficient than appealing to
classical random walk approximation Monte Carlo methods. Indeed, later
in this paper, we present our numerical findings with some indication
of performance against the method of random walk approximation. In this
case, our Wiener--Hopf method appears to be extremely effective.
Third, in principle, our method handles better the phenomena of
discontinuities which can occur with functionals of the form $
\mathbb{E}[f(x+X_t) \mathbf{1}_{\{x+\overline{X}_t >b\}}]
$
at the boundary point $x=b$. It is now well understood that the issue
of regularity of the upper and lower half line for the underlying L\'
evy process (see Chapter~6 of~\cite{Kypbook} for a definition) is
responsible the appearance of a discontinuity at \mbox{$x=b$} in such
functions (cf. \cite{AK}). The nature of our Wiener--Hopf method
naturally builds the distributional atom which is responsible for this
discontinuity into the simulations.

Additional advantages to the method we propose include its simplicity
with regard to numerical implementation. Moreover, as we shall also see
in Section \ref{Sect:extensions} of this paper, the natural
probabilistic structure that lies behind our so-called Wiener--Hopf
Monte Carlo method also
allows for additional creativity when addressing some of the
deficiencies of the method itself.

\section{Wiener--Hopf Monte Carlo simulation technique}

The basis of the algorithm is the following simple observation which
was pioneered by Carr~\cite{Carr} and subsequently used in several
contexts within mathematical finance for producing approximate
solutions to free boundary value problems that appear as a result of
optimal stopping problems characterizing the value of an American-type option.

Suppose that ${\mathbf{e}_1, \mathbf{e}_2, \ldots }$ are a sequence
of i.i.d. exponentially distributed random variables with unit mean.
Suppose they are all defined on a common product space with product law
$\mathbf{P}$ which is orthogonal to the probability space on which the
L\'evy process $X$ is defined. For all $t>0$, we\vadjust{\goodbreak} know from the Strong
Law of Large Numbers that
%
\begin{equation}
\sum_{i=1}^n \frac{t}{n}\mathbf{e}_i \rightarrow t  \qquad \mbox{as }n\uparrow\infty
\label{SLLN}
\end{equation}
$\mathbf{P}$-almost surely.
The random variable on the left-hand side above is equal in law to a
Gamma random variable with parameters $n$ and $n/t$. Henceforth, we
write it $\mathbf{g}(n, n/t)$. Recall that $\mathbb{P}$ is our notation
for the law of the L\'evy process~$X$. Then writing $\overline{X}_t =
\sup_{s\leq t}X_s$ 
we argue the case that, for sufficiently large~$n$, a suitable
approximation to $\mathbb{P}(X_t\in\mathrm{d}x,   \overline{X}_t \in
\mathrm{d}y)$ is $(\mathbf{P}\times\mathbb{P})(X_{\mathbf{g}(n,n/t)}
\in\mathrm{d}x,   \overline{X}_{\mathbf{g}(n,n/t)} \in\mathrm{d}y)$.

This approximation gains practical value in the context of Monte Carlo
simulation when we take advantage of the fundamental path decomposition
that applies to all L\'evy processes over exponential time periods
known as the Wiener--Hopf factorization.\vspace*{-3pt}


\begin{theorem}\label{thm_main}
For all $n\ge1$ and $\lambda>0$, define $\mathbf{g}(n, \lambda) : =
\sum_{i=1}^n\mathbf{e}_i/\lambda$. Then
\begin{equation}\label{distr_identity}
\bigl(X_{\mathbf{g}(n,\lambda)}, \overline{X}_{\mathbf{g}(n,\lambda)}\bigr)
\stackrel{d}{=}(V(n,\lambda), J(n,\lambda)),
\end{equation}
where $V(n,\lambda)$ and $J(n,\lambda)$ are defined iteratively for
$n\ge1$ as
\begin{eqnarray*}
V(n,\lambda) &=& V(n-1,\lambda)+S^{(n)}_\lambda+ I^{(n)}_\lambda,\\[-2pt]
J(n, \lambda) &=& \max \bigl( J(n-1,\lambda) , V(n-1,\lambda)+
S^{(n)}_\lambda \bigr)
\end{eqnarray*}
and $V(0,\lambda)=J(0,\lambda)=0$.
Here, $S^{(0)}_\lambda= I^{(0)}_\lambda = 0$, $\{S^{(j)}_\lambda\dvtx
j\ge1\}$ are an i.i.d. sequence of random variables with common
distribution equal to that of $\overline{X}_{\mathbf{e}_1/\lambda}$
and $\{I^{(j)}_\lambda\dvtx  j\ge1\}$ are another i.i.d. sequence of
random variables with common distribution equal to that of $\underline
{X}_{\mathbf{e}_1/\lambda}$.\vspace*{-3pt}
\end{theorem}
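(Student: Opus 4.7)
The plan is to prove the distributional identity by induction on $n$, with the Wiener--Hopf factorisation at an independent exponential time serving as the engine at each step. The base case $n=1$ and the inductive step are structurally the same: the key is to peel off the last exponential increment $\mathbf{e}_n/\lambda$ of the gamma clock and apply the Wiener--Hopf decomposition to the Lévy process run over that increment.

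The first ingredient I would carefully record is the Wiener--Hopf identity in the form I need: if $\mathbf{e}$ is an independent $\text{Exp}(1)$ random variable, then $\overline{X}_{\mathbf{e}/\lambda}$ and $X_{\mathbf{e}/\lambda}-\overline{X}_{\mathbf{e}/\lambda}$ are independent, and by duality $X_{\mathbf{e}/\lambda}-\overline{X}_{\mathbf{e}/\lambda} \stackrel{d}{=} \underline{X}_{\mathbf{e}/\lambda}$. Consequently, for a single exponential time,
\[
X_{\mathbf{e}/\lambda} \stackrel{d}{=} S + I, \qquad \overline{X}_{\mathbf{e}/\lambda} \stackrel{d}{=} S,
\]
jointly, where $S \stackrel{d}{=} \overline{X}_{\mathbf{e}/\lambda}$, $I \stackrel{d}{=} \underline{X}_{\mathbf{e}/\lambda}$ and $S$ and $I$ are independent. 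This immediately gives the $n=1$ case since $V(1,\lambda) = S^{(1)}_\lambda + I^{(1)}_\lambda$ and $J(1,\lambda) = S^{(1)}_\lambda$.

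For the inductive step, assume the statement holds at level $n-1$. Write $\mathbf{g}(n,\lambda) = \mathbf{g}(n-1,\lambda) + \mathbf{e}_n/\lambda$, and consider the shifted process $X'_s := X_{\mathbf{g}(n-1,\lambda)+s} - X_{\mathbf{g}(n-1,\lambda)}$, $s\ge 0$. Since $\mathbf{e}_n$ is independent of $\mathbf{g}(n-1,\lambda)$ and of $X$, the strong Markov property (or, more elementarily, stationary and independent increments together with a conditioning argument on the deterministic time $\mathbf{g}(n-1,\lambda)$) ensures that $X'$ is a Lévy process with the same law as $X$, run independently of $\mathcal{F}_{\mathbf{g}(n-1,\lambda)}$, over the independent exponential time $\mathbf{e}_n/\lambda$. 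One then has the pathwise identities
\[
X_{\mathbf{g}(n,\lambda)} = X_{\mathbf{g}(n-1,\lambda)} + X'_{\mathbf{e}_n/\lambda}, \qquad
\overline{X}_{\mathbf{g}(n,\lambda)} = \overline{X}_{\mathbf{g}(n-1,\lambda)} \vee \bigl(X_{\mathbf{g}(n-1,\lambda)} + \overline{X'}_{\mathbf{e}_n/\lambda}\bigr).
\]
Apply the Wiener--Hopf identity to $X'$ over the time $\mathbf{e}_n/\lambda$ to replace $X'_{\mathbf{e}_n/\lambda}$ by $S^{(n)}_\lambda + I^{(n)}_\lambda$ (with $S^{(n)}_\lambda$ playing the role of $\overline{X'}_{\mathbf{e}_n/\lambda}$), where $(S^{(n)}_\lambda, I^{(n)}_\lambda)$ are independent, distributed as required, and independent of $(X_{\mathbf{g}(n-1,\lambda)}, \overline{X}_{\mathbf{g}(n-1,\lambda)})$. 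The inductive hypothesis then allows the pair $(X_{\mathbf{g}(n-1,\lambda)}, \overline{X}_{\mathbf{g}(n-1,\lambda)})$ to be replaced by $(V(n-1,\lambda), J(n-1,\lambda))$, yielding exactly the recursion defining $(V(n,\lambda), J(n,\lambda))$.

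The main obstacle, though modest, is the bookkeeping of independence: one needs to ensure that at the $n$-th step the freshly introduced pair $(S^{(n)}_\lambda, I^{(n)}_\lambda)$ is jointly independent of the pair built up from the previous $n-1$ exponential pieces, and that the joint (not just marginal) Wiener--Hopf identity is being used. Once the shift to $X'$ is phrased in terms of the independent lattice of clocks $(\mathbf{e}_1,\ldots,\mathbf{e}_n)$, this independence is transparent; the rest is a clean induction.
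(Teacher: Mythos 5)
Your proposal is correct and follows essentially the same route as the paper: induction over the exponential pieces of the gamma clock, using stationary independent increments to isolate the last increment and the Wiener--Hopf factorisation (independence of $\overline{X}_{\mathbf{e}/\lambda}$ and $X_{\mathbf{e}/\lambda}-\overline{X}_{\mathbf{e}/\lambda}$, with the latter distributed as $\underline{X}_{\mathbf{e}/\lambda}$) to produce the pair $(S^{(n)}_\lambda, I^{(n)}_\lambda)$. The only difference is organisational: you carry the bivariate pair $(X_{\mathbf{g}(n,\lambda)},\overline{X}_{\mathbf{g}(n,\lambda)})$ through a single induction, whereas the paper first establishes the joint-in-$n$ identity for the process values and for the interval suprema separately and then combines them via $\overline{X}_{\mathbf{g}(n,\lambda)}=\bigvee_{i=1}^n\overline{X}_{\mathbf{g}(i-1,\lambda),\mathbf{g}(i,\lambda)}$ --- a cosmetic rather than substantive distinction.
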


\begin{pf}
The Wiener--Hopf factorization tells us that $\overline{X}_{\mathbf
{e}_1/\lambda}$ and $X_{\mathbf{e}_1/\lambda} -\overline
{X}_{\mathbf{e}_1/\lambda}$ are independent and the second of the
pair is equal in distribution to~$\underline{X}_{\mathbf{e}_1/\lambda
}$. This will constitute the key element of the proof.

Fix $n\geq1$. Suppose we define $\overline{X}_{s,t} = \sup_{s\leq
u\leq t}X_u$. Then it is trivial to note that
%
\begin{eqnarray}
\label{max}
&&\bigl(X_{\mathbf{g}(n,\lambda)},
\overline{X}_{\mathbf{g}(n,\lambda)}\bigr)\nonumber\hspace*{-15pt}
\\[-9pt]
\\[-9pt]
&& \qquad =  \bigl(X_{\mathbf{g}(n-1,\lambda)} +\bigl(X_{\mathbf{g}(n,\lambda)}-
X_{\mathbf{g}(n-1,\lambda)}\bigr), \overline{X}_{\mathbf{g}(n-1,\lambda
)}\vee\overline{X}_{\mathbf{g}(n-1, \lambda), \mathbf{g}(n,
\lambda)} \bigr),
\nonumber\hspace*{-15pt}
\end{eqnarray}
where $\mathbf{g}(0, \lambda) := 0$. If we define $X^{(n)}_t =
X_{\mathbf{g}(n-1, \lambda)+t} - X_{\mathbf{g}(n-1, \lambda)}$ and
$\overline{X}^{(n)}_{\mathbf{e}_n/\lambda} = \sup_{s\leq\mathbf
{e}_n/\lambda}X^{(n)}_s$, then from (\ref{max}) it follows that
\[
\bigl(X_{\mathbf{g}(n,\lambda)}, \overline{X}_{\mathbf{g}(n,\lambda)}\bigr)
=  \bigl(X_{\mathbf{g}(n-1,\lambda)} +X^{(n)}_{\mathbf{e}_n/\lambda
}, \overline{X}_{\mathbf{g}(n-1,\lambda)}\vee\bigl(X_{\mathbf
{g}(n-1,\lambda)} + \overline{X}^{(n)}_{\mathbf{e}_n/\lambda
}\bigr) \bigr).
\]
Now noting that the process $X^{(n)}$ is independent of $\{X_s \dvtx  s\leq
\mathbf{g}(n-1,\lambda)\}$ and has law $\mathbb{P}$ and,
moreover,\vadjust{\goodbreak}
recalling the distributional Wiener--Hopf decomposition described at the
beginning of the proof, it follows that
\[
\bigl(X_{\mathbf{g}(n,\lambda)}, \overline{X}_{\mathbf{g}(n,\lambda)}\bigr)
\stackrel{d}{=} \bigl(X_{\mathbf{g}(n-1,\lambda)} + S^{(n)}_\lambda
+ I^{(n)}_\lambda, \overline{X}_{\mathbf{g}(n-1,\lambda)}\vee
\bigl(X_{\mathbf{g}(n-1,\lambda)} + S^{(n)}_\lambda \bigr)\bigr),
\]
where $S^{(n)}_\lambda$ and $I^{(n)}_\lambda$ defined as in the
statement of the theorem. The conclusion of the theorem now follows immediately.
%
%
\end{pf}

Note that the idea of embedding a random walk into the path of a L\'evy
process with two types of step distribution determined by the
Wiener--Hopf factorization has been used in a different, and more
theoretical context by Doney \cite{Don04}.

Given (\ref{SLLN}), it is clear that the pair $(V(n, n/t), J(n, n/t))$
converges in distribution to $(X_t, \overline{X}_t)$. This suggests
that we need only to be able to simulate i.i.d. copies of the
distributions of $S_{n/t}: = S^{(1)}_{n/t}$ and $I_{n/t} : =
I^{(1)}_{n/t}$ and then by a simple functional transformation we may
produce a realisation of the random variables $(X_{\mathbf{g}(n,
n/t)},\overline{X}_{\mathbf{g}(n, n/t)})$. Given a suitably nice
function $F$, using standard Monte Carlo methods one estimates for
large $k$
%
\begin{equation}
\mathbb{E}[F(X_t, \overline{X}_{t })] \simeq\frac{1}{k}\sum
_{m=1}^k F\bigl(V^{(m)}(n, n/t), J^{(m)}(n, n/t)\bigr),
\label{WH-MC}
\end{equation}
where $(V^{(m)}(n, n/t), J^{(m)}(n, n/t))$ are i.i.d. copies of
$(V(n,n/t), J(n, n/t))$. Indeed the strong law of large numbers implies
that the right-hand side above converges almost surely as $k\uparrow
\infty$ to $\mathbf{E}\times\mathbb{E}(F(X_{\mathbf{g}(n,n/t)} ,
\overline{X}_{\mathbf{g}(n,n/t)} ) )$ which in turn converges as
$n\uparrow\infty$ to $\mathbb{E}(F(X_t, \overline{X}_{t }))$.


\section{Implementation}\label{sec_impl}

The algorithm described in the previous section only has practical
value if one is able to sample from the distributions of $\overline
{X}_{\mathbf{e}_1/\lambda}$ and $-\underline{X}_{\mathbf
{e}_1/\lambda}$. It would seem that this, in itself, is not that much
different from the problem that it purports to
solve. However, it turns out that there are many tractable examples and
in all cases this is due to the tractability of their Wiener--Hopf
factorizations.

Whilst several concrete cases can be handled from the class of
spectrally one-sided L\'evy processes thanks to recent development in
the theory of scale functions, which can be used to described the laws
of $\overline{X}_{\mathbf{e}_1/\lambda}$ and $-\underline
{X}_{\mathbf{e}_1/\lambda}$ (cf. \cite{HK,KR}), we give here two
large families of two-sided jumping L\'evy processes that have
pertinence to mathematical finance to show how the algorithm may be implemented.

\subsection{\texorpdfstring{$\beta$-class of L\'evy processes}{beta-class of L\'evy processes}}

The $\beta$-class of L\'evy processes, introduced in \cite{Kuz}, is a
10-parameter L\'evy process which has characteristic exponent
\begin{eqnarray*}
\Psi(\theta) &=& \mathrm{i}a\theta+ \frac{1}{2}\sigma^2 \theta^2 +
\frac{c_1}{\beta_1}  \biggl\{ {\mathrm{B}}(\alpha_1, 1-\lambda_1)-
{\mathrm{B}} \biggl( \alpha_1 - \frac{\mathrm{i}\theta}{\beta_1},
1- \lambda_1 \biggr) \biggr\}\\
&&{} + \frac{c_2}{\beta_2} \biggl\{ {\mathrm{B}}(\alpha_2,
1-\lambda_2) - {\mathrm{B}} \biggl( \alpha_2 + \frac{\mathrm{i}\theta}{\beta_2}, 1- \lambda_2 \biggr) \biggr\}
\end{eqnarray*}
with parameter range $a,\sigma\in\mathbb{R},   c_1, c_2, \alpha_1,
\alpha_2, \beta_1, \beta_2>0$ and $\lambda_1, \lambda_2\in
(0,3)\setminus\{1,2\}$.
Here ${\mathrm B}(x,y)=\Gamma(x)\Gamma(y)/\Gamma(x+y)$ is the
Beta function (see \cite{Jef2007}).
The density of the L\'evy measure is given by
\[
\pi(x) = c_1\frac{e^{-\alpha_1 \beta_1 x}}{(1- e^{-\beta_1
x})^{\lambda_1}}\mathbf{1}_{\{x>0\}}
+ c_2 \frac{e^{\alpha_2\beta_2 x}}{(1- e^{\beta_2 x})^{\lambda
_2}}\mathbf{1}_{\{x<0\}}.
\]
Although $\Psi$ takes a seemingly complicated form, this particular
family of L\'evy processes has a number of very beneficial virtues from
the point of view of mathematical finance which are discussed in \cite{Kuz}.
Moreover, the large number of parameters also allows one to choose L\'
evy processes within the $\beta$-class that have paths that are both
of unbounded variation [when at least one of the conditions $\sigma
\neq0$, $\lambda_1\in(2,3)$ or $\lambda_2\in(2,3)$ holds] and
bounded variation [when all of the conditions $\sigma=0$, $\lambda
_1\in(0,2)$ and $\lambda_2\in(0,2)$ hold] as well as having infinite
and finite activity in the jumps component [accordingly as both
$\lambda_1,\lambda_2\in(1,3)$ or not].

What is special about the $\beta$-class is that all the roots of the
equation $\lambda+\Psi(\theta)=0$ are analytically identifiable
which leads to semi-explicit identities for the laws of $\overline
{X}_{\mathbf{e}_1/\lambda}$ and $-\underline{X}_{\mathbf
{e}_1/\lambda}$ as the following result lifted from \cite{Kuz} shows.
\begin{theorem}\label{kuz1}
For $\lambda>0$, all the roots of the equation
\[
\lambda+\Psi(\theta) = 0
\]
are simple and occur on the imaginary axis. They can be enumerated by
$\{\mathrm{i}\zeta_n^+\dvtx  n\geq0\}$ on the positive imaginary axis and $\{
\mathrm{i}\zeta_n^- \dvtx  n\geq0\}$ on the negative imaginary axis in order
of increasing absolute magnitude where
\begin{eqnarray*}
 \zeta^+_0 &\in&(0, \beta_2\alpha_2),  \qquad   \zeta^-_{0} \in(-\beta
_1\alpha_1 , 0),  \\
 \zeta^+_n &\in&\bigl(\beta_2(\alpha_2 +n-1),   \beta_2(\alpha_2+n)\bigr) \qquad \mbox
{for }n\geq1, \\
 \zeta^-_{n} &\in&\bigl(\beta_1(-\alpha_1 -n),   \beta_1(-\alpha
_1-n+1)\bigr) \qquad \mbox{for }n\geq1.
\end{eqnarray*}
Moreover, for $x>0$,
%
\begin{equation}
\mathbb{P}(\overline{X}_{\mathbf{e}_1 /\lambda}\in\mathrm{d}x) =
-\biggl (\sum_{k\geq0} c_k^- \zeta_k^- e^{\zeta^-_k x}
\biggr)\,\mathrm{d}x,
\label{max-density}
\end{equation}
where
\[
c_0^- = \prod_{n\geq1} \frac{1+ {\zeta_0^-}/({\beta
_1(n-1+\alpha_1)})}{1-  {\zeta_0^-}/{\zeta_n^-}}
\]
and
\[
c_k^- =
\frac{1+  {\zeta_k^-}/({\beta_1(k-1+\alpha_1)})}{1-  {\zeta
_k^-}/{\zeta_0^-}}
\prod_{n\geq1, n\neq k} \frac{1+ {\zeta_k^-}/({\beta
_1(n-1+\alpha_1)})}{1-  {\zeta_k^-}/{\zeta_n^-}}.
\]
A similar expression holds for $\mathbb{P}(-\underline{X}_{\mathbf
{e}_1 /\lambda}\in\mathrm{d}x)$ with the role of $\{\zeta^-_n\dvtx  n\geq0\}
$ being played by $\{- \zeta^+_n \dvtx  n\geq0\}$ and $\alpha_1, \beta
_1$ replaced by $\alpha_2, \beta_2$.
\end{theorem}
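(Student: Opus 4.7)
The plan is to follow Kuznetsov's approach \cite{Kuz}, whose argument rests on a detailed meromorphic analysis of $\lambda + \Psi(\theta)$ combined with the Wiener-Hopf factorisation. The first step is to locate the poles of $\Psi$. Since $B(x,1-\lambda_i)=\Gamma(x)\Gamma(1-\lambda_i)/\Gamma(x+1-\lambda_i)$ inherits its only (simple) poles from $\Gamma(x)$ at the non-positive integers, the term $B(\alpha_2+i\theta/\beta_2,1-\lambda_2)$ contributes simple poles of $\Psi$ at $\theta = i\beta_2(\alpha_2+n)$, $n\geq 0$, on the positive imaginary axis, and symmetrically $B(\alpha_1-i\theta/\beta_1,1-\lambda_1)$ contributes simple poles at $\theta=-i\beta_1(\alpha_1+n)$, $n\geq 0$, on the negative imaginary axis; $\Psi$ is entire elsewhere. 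This already identifies the intervals in which the roots are to be placed.

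Restricting next to the imaginary axis via $\theta=-iz$, $z\in\mathbb{R}$, the function $\psi(z):=-\Psi(-iz)$ coincides on the strip of finiteness $(-\beta_2\alpha_2,\beta_1\alpha_1)$ with the genuine Laplace exponent of $X$ and is there strictly convex with $\psi(0)=0$ and $\psi(z)\to+\infty$ at both endpoints. Hence $\psi(z)=\lambda$ has exactly two real solutions in the strip, one of each sign, producing the innermost roots $i\zeta_0^{\pm}$ at the prescribed locations and (by strict convexity) of multiplicity one. For each further interval between consecutive poles of $\psi$ on the positive (resp.\ negative) real $z$-axis, a direct Laurent expansion combined with the reflection formula for $\Gamma$ pins down the sign of each residue: one finds residues of constant sign across all poles on a given half-axis, which forces $\psi$ to diverge to $-\infty$ at the right endpoint and $+\infty$ at the left endpoint of every such interval. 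The intermediate value theorem then supplies at least one real root of $\psi(z)=\lambda$ in every interval, giving the claimed interlacing.

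To exclude non-imaginary roots of $\lambda+\Psi$ and to confirm simplicity of \emph{all} imaginary roots, I would apply Rouch\'e's theorem on a sequence of enlarging rectangular contours $R_N\subset\mathbb{C}$ whose horizontal sides run along $\mathrm{Im}\,\theta = \pm\beta_i(\alpha_i+N+\tfrac12)$ and whose vertical sides lie on $\mathrm{Re}\,\theta = \pm N$. Stirling asymptotics applied to the Gamma factors in $B(\cdot,1-\lambda_i)$ give uniform control of $|\Psi(\theta)|$ along $\partial R_N$, permitting comparison of $\lambda+\Psi$ with a dominant piece (the term $\tfrac12\sigma^2\theta^2$ when $\sigma\neq 0$, or the leading Beta contribution otherwise) and forcing the total zero count inside $R_N$ to match exactly the number of imaginary roots already located there. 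I expect this contour estimate to be the principal technical obstacle, as the asymptotic behaviour of the Beta function near the imaginary axis is delicate and must be controlled uniformly in $N$; this is precisely the heart of the analysis in \cite{Kuz}.

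Finally, for the density formula, invoke the Wiener-Hopf identity
\[
\frac{\lambda}{\lambda+\Psi(\theta)} = \phi^+_\lambda(\theta)\,\phi^-_\lambda(\theta),
\]
where $\phi^{\pm}_\lambda$ are the characteristic functions of $\overline{X}_{\mathbf{e}_1/\lambda}$ and $\underline{X}_{\mathbf{e}_1/\lambda}$, analytic and bounded in the closed upper and lower half-planes respectively. Since $\phi^+_\lambda$ has no singularities in the closed upper half-plane, the simple poles of the left hand side at $\{i\zeta_n^+\}$ must be absorbed into $\phi^-_\lambda$, and symmetrically the simple poles at $\{i\zeta_n^-\}$ belong to $\phi^+_\lambda$. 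A Liouville-type uniqueness argument for Wiener-Hopf decompositions, together with a Hadamard-type product built from the known zeros (at the poles of $\Psi$ in the lower half-plane, namely $-i\beta_1(n-1+\alpha_1)$ for $n\geq 1$) and the known simple poles (at $i\zeta_n^-$) of $\phi^+_\lambda$, pins $\phi^+_\lambda$ down as an explicit ratio of infinite products in $\theta$. A Mittag-Leffler expansion of $\phi^+_\lambda$ over its simple poles followed by Fourier inversion --- closing the contour in the lower half-plane for $x>0$ --- produces the series $-\sum_{k\geq 0}c_k^-\zeta_k^- e^{\zeta_k^- x}$, with the coefficients $c_k^-$ emerging as the stated products once one computes the residues from the Hadamard product representation. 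The expression for $-\underline{X}_{\mathbf{e}_1/\lambda}$ is obtained by running the same argument for the dual process $-X$, whose characteristic exponent is $\Psi(-\theta)$ and which swaps the roles of the two halves of the imaginary axis.
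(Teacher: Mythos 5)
The paper does not actually prove Theorem~\ref{kuz1} --- it is quoted verbatim from Kuznetsov \cite{Kuz} --- but your outline reproduces essentially the strategy that \cite{Kuz} uses and that this paper itself deploys for the analogous Theorem~\ref{HG}: locate the Beta-function poles and use the reflection formula plus the intermediate value theorem to interlace one root per interval on the imaginary axis, control the asymptotics of the roots and a global meromorphic-function argument (Kuznetsov's Lemma~6, in place of your Rouch\'e contours) to rule out further roots and obtain the infinite-product factorisation of $\lambda/(\lambda+\Psi(\theta))$, identify $\phi^{\pm}_\lambda$ by analytic uniqueness of the Wiener--Hopf factors, and recover the density by residue calculus. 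Your proposal is correct in approach and correctly flags the uniform contour/asymptotic estimates as the real technical content.
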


Note that when $0$ is irregular for $(0,\infty)$ the distribution of
$\overline{X}_{\mathbf{e}_1/\lambda}$ will have an atom at $0$ which
can be computed from (\ref{max-density}) and is equal to $1-\sum
_{k\geq0} c_k^-$. Alternatively, from Remark 6 in \cite{Kuz} this can
equivalently be written as
$\prod_{n\geq0} (-\zeta^-_n) /\beta_1(n+\alpha_1)$. A similar
statement can be made concerning an atom at $0$ for the distribution of
$-\underline{X}_{\mathbf{e}_1/\lambda}$ when $0$ is irregular for
$(-\infty,0)$. Conditions for irregularity are easy to check thanks to
Bertoin \cite{irreg}; see also the summary in Kyprianou and Loeffen
\cite{KL} for other types of L\'evy processes that are popular in
mathematical finance.

By making a suitable truncation of the series (\ref{max-density}), one
may easily perform independent sampling from the distributions
$\overline{X}_{\mathbf{e}_1 /\lambda}$ and $\underline{X}_{\mathbf
{e}_1 /\lambda}$ as required for our Monte Carlo methods.

\subsection{Philanthropy and general hypergeometric L\'evy processes}

The forthcoming discussion will assume familiarity with classical
excursion theory of L\'evy processes for which the reader is referred
to Chapter VI of \cite{Bertbook} or Chapter 6 of \cite{Kypbook}.

According to Vigon's theory of philanthropy, a (killed) subordinator is
called a \textit{philanthropist} if its L\'evy measure has a decreasing
density on $\R_+$. Moreover, given any two subordinators $H_1$ and
${H_2}$ which are philanthropists, providing that at least one of them
is not killed, there exists a L\'evy process~$X$ such that $H_1$ and
${H_2}$ have the same law as the ascending and descending ladder height
processes of $X$, respectively. (In the language of Vigon, the
philanthropists $H_1$ and $H_2$ are \textit{friends}.)
Suppose we denote the killing rate, drift coefficient and L\'evy
measures of $H_1$ and ${H_2}$ by the respective triples $(k,\delta,\Pi
_{H_1})$ and $(\widehat{k},\widehat{\delta},\Pi_{{H_2}})$.
Then \cite{V} shows that the L\'evy measure of $X$ satisfies the
following identity:
%
\begin{equation}\label{phillm}
 \quad \overlineh{\Pi}^+_X(x)=\int_0^{\infty}\Pi_{H_1}(x+\mathrm{d} u)\overlineh
{\Pi}_{{H_2}}(u)+\widehat{\delta} \pi_{H_1}(x)+\widehat{k}^{\,}
\overlineh{\Pi}_{H_1}(x), \qquad x>0,
\end{equation}
where $\overlineh{\Pi}^+_X(x) := \Pi_X(x,\infty)$, $\overlineh{\Pi
}_{{H_1}}(u): = \Pi_{{H_1}}(u,\infty)$, $\overlineh{\Pi}_{{H_2}}(u):
= \Pi_{{H_2}}(u,\infty)$\break and
$\pi_{H_1}$ is the density of $\Pi_{H_1}$.
By symmetry, an obvious analogue of (\ref{phillm}) holds for the
negative tail $\overlineh{\Pi}_X^-(x): = \Pi_X(-\infty, x)$, $x<0$.

A particular family of subordinators which will be of interest to us is
the class of subordinators which is found within the definition of
Kuznetsov's $\beta$-class of L\'evy processes. These processes have
characteristics $(c, \alpha,\beta, \gamma)$ where $\gamma\in
(-\infty,0) \cup(0,1) $, $\beta, c>0$ and $1-\alpha+\gamma>0$. The
L\'evy measure of such subordinators is of the type
%
\begin{equation}\label{mlls}
c\frac{e^{\alpha\beta x}}{(e^{\beta x}-1)^{1+\gamma}}1_{\{x>0\}}\,
\mathrm{d}x.
\end{equation}

From Proposition 9 in \cite{Kuz}, the Laplace exponent of a $\beta
$-class subordinator satisfies
\begin{equation}\label{kuz}
\Phi(\theta)
=\mathtt{k}+\delta\theta+\frac{c}{\beta}  \{
{\mathrm{B}} (1-\alpha+\gamma, - \gamma ) -
{\mathrm{B}} ( 1-\alpha+\gamma+\theta/\beta, - \gamma
 ) \}
\end{equation}
for $\theta\geq0$
where $\delta$ is the drift coefficient and $\mathtt{k}$ is the
killing rate.

Let $H_1$ and $H_2$ be two independent subordinators from the $\beta
$-class where for $i=1,2,$ with respective drift coefficients $\delta
_i\ge0$, killing rates $\mathtt{k}_i\geq0$ and L\'evy measure
parameters $(c_i,\alpha_i,\beta,\gamma_i)$.
Their respective Laplace exponents are denoted by $\Phi_i$, $i=1,2$.
In Vigon's theory of philanthropy, it is required that $\mathtt
{k}_1\mathtt{k}_2=0$. Under this assumption, let us denote by $X$ the
L\'evy process whose ascending and descending ladder height processes
have the same law as $H_1$ and ${H_2}$, respectively. In other words,
the L\'evy process whose characteristic exponent is given by
$
\Phi_1(-\mathrm{i}\theta)\Phi_2(\mathrm{i}\theta),   \theta\in\mathbb{R}.
$
It is important to note that the Gaussian component of the process $X$
is given by $2\delta_1\delta_2$; see \cite{V}.
From (\ref{phillm}), the L\'evy measure of $X$ is such that
\begin{eqnarray*}
\overlineh{\Pi}^+_X(x)&=&c_1c_2\int_x^{\infty}\frac{e^{\beta
_1\alpha_1 u}}{(e^{\beta_1 u}-1)^{\gamma_1+1}}\int_{u-x}^{\infty
}\frac{e^{\alpha_2\beta_2 z}}{(e^{\beta_2z}-1)^{\gamma_2+1}}\,\mathrm{d}z\,
\mathrm{d}u\\
&&{}+\delta_2 c_1\frac{e^{\beta_1\alpha_1 x}}{(e^{\beta_1
x}-1)^{\gamma_1+1}}
+\mathtt{k}_2c_1\int_x^{\infty}\frac{e^{\beta_1\alpha_1
u}}{(e^{\beta_1 u}-1)^{\gamma_1+1}}\, \mathrm{d}x.
\end{eqnarray*}
Assume first that $\gamma_2<0$, taking derivative in $x$ and computing
the resulting integrals with the help of \cite{Jef2007} we find that
for $x>0$ the density of the L\'evy measure is given by
\begin{eqnarray*}
\pi(x)&=&-\frac{c_1c_2}{\beta} {\mathrm B}(\rho,-\gamma_2)
e^{-\beta x (1+\gamma_1-\alpha_1)} {}_2F_1(1+\gamma_1,\rho;\rho
-\gamma_2;e^{-\beta x})\\
&&{}+c_1  \biggl({\mathrm k}_2 +\frac{c_2}{\beta} {\mathrm
B}(1+\gamma_2-\alpha_2,-\gamma_2)  \biggr) \frac{e^{\alpha_1 \beta
x}}{(e^{\beta x}-1)^{1+\gamma_1}}\\
&&{}-
\delta_2 c_1 \frac{\d}{\d x}  \biggl[ \frac{e^{\alpha_1 \beta
x}}{(e^{\beta x}-1)^{1+\gamma_1}}  \biggr],
\end{eqnarray*}
where $\rho=2+\gamma_1+\gamma_2-\alpha_1-\alpha_2$. The validity
of this formula is extended for $\gamma_2 \in(0,1)$ by analytical
continuation.
The corresponding expression for $x<0$ can be obtained by symmetry
considerations.

We define a General Hypergeometric process to be the 13 parameter L\'
evy process with characteristic exponent given in compact form
%
\begin{equation}\label{compact}
\Psi(\theta) = \mathtt{d}\mathrm{i}\theta+ \frac{1}{2}\sigma^2
\theta^2 + \Phi_1(-\mathrm{i}\theta)\Phi_2(\mathrm{i}\theta),  \qquad   \theta
\in\mathbb{R},
\end{equation}
where $\mathtt{d},\sigma\in\mathbb{R}$.
The two additional parameters $\mathtt{d}, \sigma$ are included
largely with applications in mathematical finance in mind. Without
these two additional parameters, it is difficult to disentangle the
Gaussian coefficient and the drift coefficients from parameters
appearing in the jump measure. Note that the Gaussian coefficient in
(\ref{compact}) is now $\sigma^2/2 + 2\delta_1\delta_2$. The
definition of General Hypergeometric L\'evy processes includes
previously defined Hypergeometric L\'evy processes in Kyprianou, Pardo
and Rivero
\cite{KPR}, Caballero, Pardo and P{\'e}rez \cite{CPP2} and Lamperti-stable L\'evy
processes in Caballero, Pardo and P{\'e}rez~\cite{CPP}.\looseness=-1

Just as with the case of the $\beta$-family of L\'evy processes,
because $\Psi$ can be written as a linear combination of a quadratic
form and beta functions, it turns out that one can identify all the
roots of the equation $\Psi(\theta)+\lambda=0$ which is again
sufficient to describe the laws of $\overline{X}_{\mathbf
{e}_1/\lambda}$ and $-\underline{X}_{\mathbf{e}_1/\lambda}$.

\begin{theorem}\label{HG}
For $\lambda>0$, all the roots of the equation
\[
\lambda+\Psi(\theta) = 0
\]
are simple and occur on the imaginary axis.  They can be
enumerated by $\{\mathrm{i}\xi_n^+\dvtx  n\geq0\}$ on the positive imaginary
axis and $\{\mathrm{i}\xi_n^- \dvtx  n\geq0\}$ on the negative imaginary axis
in order of increasing absolute magnitude where
\begin{eqnarray*}
 \xi^+_0&\in& \bigl(0,\beta(1+\gamma_2-\alpha_2) \bigr),  \qquad  \xi^-_0\in
 \bigl(-\beta(1+\gamma_1-\alpha_1),0 \bigr), \\
 \xi^+_n&\in& \bigl(\beta(\gamma_2-\alpha_2 +n),\beta(1+\gamma
_2-\alpha_2 +n) \bigr)
 \qquad \mbox{for }n\geq1, \\
 \xi^-_n&\in& \bigl(-\beta(1+\gamma_1-\alpha_1+n),-\beta(\gamma
_1-\alpha_1+n) \bigr) \qquad \mbox{for }n\geq1.
\end{eqnarray*}
Moreover, for $x>0$,
%
\begin{equation}
\mathbb{P}(\overline{X}_{\mathbf{e}_1/\lambda}\in\mathrm{d} x)=
- \biggl(\sum_{k\ge0}c^-_k\xi^-_{k}e^{{\xi^-_k}x} \biggr)\,\mathrm{d}x,
\label{hypsup}
\end{equation}
where
\[
c^-_0=\prod_{n\ge1}\frac{1+ {\xi_0^-}/({\beta(\gamma_1-\alpha
_1+n)})}{1- {\xi_0^-}/{\xi^-_n}}
\]
and
\[
c^-_k=\frac{1+ {\xi_k^-}/({\beta(\gamma_1-\alpha_1+k)})}{1- {\xi_k^-}/{\xi^-_0}}\prod_{n\ge1, n\ne k}\frac{1+ {\xi
_k^-}/({\beta(\gamma_1-\alpha_1+n)})}{1- {\xi_k^-}/{\xi^-_n}}.
\]
 A similar expression holds for $\mathbb{P}(-\underline
{X}_{\mathbf{e}_1/\lambda}\in\mathrm{d}x)$ with the role of $\{\xi
^-_n\dvtx  n\geq0\}$ replaced by $\{-\xi^+_n\dvtx n\geq0\}$ and $\alpha_1,
\gamma_2$ replaced by $\alpha_2, \gamma_2$.
\end{theorem}

\begin{pf} The proof is very similar to the proof of Theorem 10 in
\cite{Kuz}. Formula (\ref{compact}) and reflection formula for the
Beta function (see \cite{Jef2007})
\begin{equation}\label{eq_beta_reflection}
{\mathrm B}(-z;-\gamma)={\mathrm B}(1+z+\gamma;-\gamma)   \frac
{\sin(\pi(z+\gamma))}{\sin(\pi z)}\vadjust{\goodbreak}
\end{equation}
tell us that $\Psi(\i\theta) \to-\infty$ as $\theta\to\beta
(1+\gamma_2 - \alpha_2)$, and since $\Psi(0)=0$ we conclude that
$\lambda+\Psi(\i\theta)=0$
has a solution on the interval $\theta\in(0,\beta(1+\gamma_2 -
\alpha_2))$. Other intervals can be checked in a similar way [note
that $\Phi_i(z)$ are Laplace exponents of subordinators, therefore
they are positive for \mbox{$z>0$}]. Next, we assume that $\sigma, \delta_1,
\delta_2>0$. Using formulas
(\ref{compact}), (\ref{eq_beta_reflection}) and the asymptotic result
\[
\frac{\Gamma(a+z)}{\Gamma(z)}=z^{a}+O(z^{a-1}),  \qquad
z\to+\infty,
\]
which can be found in \cite{Jef2007}, we conclude that $\Psi(\i
\theta)$ has the following asymptotics as $\theta\to+\infty$:
\begin{eqnarray*}
\Psi(\i\theta)&=&-\frac12 (\sigma^2+2\delta_1 \delta_2) \theta
^2+O(\theta^{1+\gamma_2})\\
&&{}-\frac{\delta_1\Gamma(-\gamma_2)}{\beta^{\gamma_2}}
\frac{\sin (\pi (\alpha_2+ {\theta}/{\beta}
)  )}{\sin (\pi (\alpha_2-\gamma_2+{\theta
}/{\beta}  )  )}
 [\theta^{1+\gamma_2} + O(\theta^{\gamma_2+\gamma_1})  ].
\end{eqnarray*}
Using the above asymptotic expansion and the same technique as in the
proof of Theorem 5 in \cite{Kuz}, we find that as $n\to+\infty$
there exists a constant~$C_1$ such that
\[
\xi_n^+ = \beta(n+1+\gamma_2-\alpha_2) + C_1 n^{\gamma_2-1} +
O(n^{\gamma_2-1-\epsilon}),
\]
with a similar expression for $\xi_n^-$. Thus, we use Lemma 6 from
\cite{Kuz} (and the same argument as in the proofs of Theorems 5 and
10 in \cite{Kuz}) to show
that first there exist no other roots of meromorphic function $\lambda
+\Psi(\i z)$ except for~$\{\xi_n^{\pm}\}$, and secondly that we have
a factorization
\begin{eqnarray*}
\frac{\lambda}{\lambda+\Psi(\theta)}&=&\frac{1}{1+ {\mathrm{i}\theta}/{\xi^-_0}}\prod_{n\ge1}\frac{1- {\mathrm{i}\theta
}/({\beta(\gamma_1-\alpha_1+n)})}{1+ {\mathrm{i}\theta}/{\xi^-_n}}\\
&&{}\times\frac{1}{1+ {\mathrm{i}\theta}/{\xi^+_0}}\prod_{n\ge
1}\frac{1+ {\mathrm{i}\theta}/({\beta(\gamma_2-\alpha
_2+n)})}{1+ {\mathrm{i}\theta}/{\xi^+_n}}.
\end{eqnarray*}
The Wiener--Hopf factoris $\phi_q^{\pm}(\theta)$ are identified from
the above equation with the help of analytical uniqueness result, Lemma
2 in \cite{Kuz}.
Formula (\ref{hypsup}) is obtained from the infinite product
representation for $\phi_q^{+}(\theta)$ using residue calculus.

\begin{table}[t]
\tabcolsep=0pt
\caption{Coefficients for the asymptotic expansion of $\xi_n^{+}$}
\label{table2}
\begin{tabular*}{\textwidth}{@{\extracolsep{\fill}}lccc@{}}
\hline
\textbf{Case} & $\bolds{\omega_2}$ & $\bolds{C}$ & $\bolds{\varrho_2}$\\
\hline
$\sigma^2, \delta_1,\delta_2>0$ & $1+\gamma_2$ & $\frac{2\delta
_1c_2}{\beta\Gamma(1+\gamma_2)(\sigma^2+2\delta_1\delta_2)}$ &
$\gamma_2-1$\\[4pt]
$\sigma=0, \delta_1,\delta_2>0$ & $1+\gamma_2$ & $\frac{c_2}{\beta
\Gamma(1+\gamma_2)\delta_2}$ & $\gamma_2-1$\\[4pt]
$\sigma^2, \delta_2>0, \delta_1=0$ & $1+\gamma_2$ & $\frac
{2c_1c_2\Gamma(1-\gamma_1)}{\beta^{3+\gamma_1-\gamma_2}\Gamma
(1+\gamma_2)\gamma_1\sigma^2}$ & $\gamma_1+\gamma_2-2$\\[2pt]
$\sigma^2, \delta_1>0, \delta_2=0$ & $1+\gamma_2$ & $\frac{2\delta
_1c_2}{\beta\Gamma(1+\gamma_2)\sigma^2}$ & $\gamma_2-1$ \\[4pt]
$\delta_2>0, \sigma=\delta_1=0$ & $1+\gamma_2$ & $\frac{c_2}{\beta
\delta_2\Gamma(1+\gamma_2)}$ & $\gamma_2-1$\\[3pt]
$\delta_1>0, \sigma=\delta_2=0$ & $0$ & $\frac{\sin(\pi\gamma
_2)}{\pi}\frac{\beta^2\gamma_2(\mu+\mathtt{d})}{\delta
_1c_2\Gamma(1-\gamma_2)}$ & $-\gamma_2$ \\[3pt]
$\sigma^2>0, \delta_1=\delta_2=0$ & $1+\gamma_2$ & $\frac
{2c_1c_2\Gamma(1-\gamma_1)}{\beta^{3+\gamma_1-\gamma_2}\Gamma
(1+\gamma_2)\gamma_1\sigma^2}$ & $\gamma_1+\gamma_2-2$ \\
$\sigma=\delta_1=\delta_2=0$ & $1$ &
$\frac{\beta^2\gamma_2}{c_2\Gamma(1-\gamma_2)}\frac{\sin(\pi
\gamma_2)}{\pi} (\mathtt{k}_2+\frac{c_2}{\beta}{\mathrm
B}(1+\gamma_2-\alpha_2;-\gamma_2) ) $
& $-\gamma_2$ \\
\hline
\end{tabular*}
\end{table}

This ends the proof in the case $\sigma, \delta_1, \delta_2>0$, in
all other cases the proof is almost identical, except that one has to do
more work to obtain asymptotics for the roots of $\lambda+\Psi(\i
\theta)=0$. We summarize all the possible asymptotics of the roots below
\[
\xi^+_n= \beta(n-\alpha_2+\omega_2)+C n^{\varrho_2}+O(n^{\varrho
_2-\epsilon})\qquad\mbox{as }  n\to\infty,
\]
where the coefficients $\omega_2, \varrho_2$ and $C$ are presented in
Table \ref{table2}. Corresponding results for $\xi_n^-$ can be
obtained by symmetry considerations.
\end{pf}

\begin{remark} Similar comments to those made after Theorem \ref
{kuz1} regarding the existence of atoms in the distribution of
$\overline{X}_{\mathbf{e}_1/\lambda}$ and $-\underline{X}_{\mathbf
{e}_1/\lambda}$ also apply here.
\end{remark}

\begin{remark}
It is important to note that the hypergeometric L\'evy process is but
one of many examples of L\'evy processes which may be constructed using
Vigon's theory of philanthropy. With the current Monte Carlo algorithm
in mind, it should be possible to engineer other favorable L\'evy
processes in this way.
\end{remark}

\section{Extensions}\label{Sect:extensions}
\subsection{Building in arbitrary large jumps}

The starting point for the Wiener--Hopf Monte Carlo algorithm is the
distribution of $\overline{X}_{\mathbf{e}_1/\lambda}$
and $\underline{X}_{\mathbf{e}_1/\lambda}$, and in Section \ref
{sec_impl} we have presented two large families of L\'evy processes for
which one can compute these
distributions quite efficiently. We have also argued the case that one
might engineer other fit-for-purpose Wiener--Hopf factorizations using
Vigon's theory of philanthropy. However, below, we present another
alternative for extending the  application of the Wiener--Hopf
Monte Carlo technique to a much larger class of L\'evy processes than
those for which sufficient knowledge of the Wiener--Hopf factorization
is known.
Indeed the importance of Theorem \ref{thm_cmpnd_Poisson} below is that
we may now work with any L\'evy processes whose L\'evy measure can be
written as a sum of
a L\'evy measure from the $\beta$-family or hypergeometric family plus
\textit{any} other measure with finite mass. This is a very general class
as a little thought reveals that many L\'evy processes necessarily take
this form. However, there are some obvious exclusions from this class,
for example, cases of L\'evy processes with bounded jumps.

\begin{theorem}\label{thm_cmpnd_Poisson} Let $Y = \{Y_t\dvtx  t\geq0\}$ be
a sum of a L\'evy process $X$ and a~compound Poisson process such that
for all $t\geq0$,
\[
Y_t=X_t+\sum_{i=1}^{N_{t}} \xi_i,
\]
where $N = \{N_t\dvtx  t\geq0\}$ is a Poisson process with intensity
$\gamma$, independent of the i.i.d. sequence of random variables, $\{
\xi_i\dvtx  i\ge1\}$, and $X$. Define iteratively for $n\ge1$
\begin{eqnarray*}
V(n,\lambda) &=& V(n-1,\lambda)+S^{(n)}_{\lambda+\gamma}+
I^{(n)}_{\lambda+\gamma}+ \xi_n (1-\beta_n), \\
J(n, \lambda) &=& \max \bigl( V(n,\lambda)  , J(n-1,\lambda)  ,
V(n-1,\lambda)+ S^{(n)}_{\lambda+\gamma}  \bigr),
\end{eqnarray*}
where $V(0,\lambda)=J(0,\lambda)=0$, sequences $\{S^{(j)}_{\lambda
+\gamma}\dvtx  n\ge1\}$ and $\{I^{(n)}_{\lambda+\gamma}\dvtx  n\geq1\}$ are
defined in Theorem \ref{thm_main}, and
$\{\beta_n\dvtx  n\ge1\}$ are an i.i.d. sequence of Bernoulli random
variables such that
$\p(\beta_n=1)=\lambda/(\gamma+\lambda)$.
Then
\begin{equation}\label{distr_identity2}
\bigl(Y_{\mathbf{g}(n,\lambda)}, \overline{Y}_{\mathbf{g}(n,\lambda)}\bigr)
\stackrel{d}{=}(V(T_n,\lambda), J(T_n,\lambda)),
\end{equation}
where
$ T_n=\min\{ j\ge1   \dvtx    \sum_{i=1}^{j} \beta_i=n \}$.
\end{theorem}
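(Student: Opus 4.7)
The plan is to reduce Theorem \ref{thm_cmpnd_Poisson} to Theorem \ref{thm_main} via a superposition and thinning construction. First, I would enlarge the probability space to carry an auxiliary rate-$\lambda$ Poisson process, independent of $X$ and of the compound Poisson component $N$, whose $n$-th arrival coincides with $\mathbf{g}(n,\lambda)$. Superposing this auxiliary process with $N$ produces a Poisson process $M$ of rate $\lambda+\gamma$, with i.i.d. inter-arrival times $\tau_1, \tau_2, \ldots$ each distributed as $\mathbf{e}_1/(\lambda+\gamma)$. By standard Poissonian thinning, each arrival of $M$ is independently labelled as either a ``sample'' arrival (coming from the auxiliary process, with probability $\lambda/(\lambda+\gamma)$) or a ``jump'' arrival (coming from $N$, with probability $\gamma/(\lambda+\gamma)$); these labels are precisely the Bernoulli variables $\beta_i$ in the statement, and $T_n$ is by definition the index of the $n$-th sample arrival. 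Consequently $\mathbf{g}(n,\lambda)=\sum_{i=1}^{T_n}\tau_i$.

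Next I would decompose the paths of $Y$ across the $M$-intervals. On each open interval $(t_{k-1},t_k)$, with $t_k=\sum_{i\leq k}\tau_i$, the compound Poisson part of $Y$ is constant, so $Y$ evolves like a translated copy of $X$; at the arrival $t_k$ the process $Y$ jumps by $\xi_k$ if $\beta_k=0$ and does not jump if $\beta_k=1$. Conditional on the labels, the $X$-increments over the disjoint intervals are independent by the strong Markov property (the $t_k$ are stopping times in the enlarged filtration and $X$ is independent of $M$ and of the jump marks), and each is distributed as $X_{\mathbf{e}_1/(\lambda+\gamma)}$. Applying the Wiener--Hopf factorisation over each $\tau_k$, exactly as in the induction in the proof of Theorem \ref{thm_main}, yields the distributional identity $Y_{t_k}\stackrel{d}{=} V(k,\lambda)$ with the recursion $V(k,\lambda)=V(k-1,\lambda)+S^{(k)}_{\lambda+\gamma}+I^{(k)}_{\lambda+\gamma}+\xi_k(1-\beta_k)$, since the $\xi_k$ contribution is appended only at ``jump'' arrivals.

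Finally, for the running maximum I would write
$$\overline{Y}_{\mathbf{g}(n,\lambda)}=\max_{1\leq k\leq T_n}\sup_{s\in[t_{k-1},t_k]}Y_s.$$
On each such interval, the supremum of $Y$ over the half-open interval $[t_{k-1},t_k)$ equals $V(k-1,\lambda)+S^{(k)}_{\lambda+\gamma}$, while the right-endpoint value $Y_{t_k}$, after the possible jump, is $V(k,\lambda)$; hence $\sup_{s\in[t_{k-1},t_k]}Y_s=\max\bigl(V(k-1,\lambda)+S^{(k)}_{\lambda+\gamma},\,V(k,\lambda)\bigr)$. Iterating over $k=1,\ldots,T_n$ reproduces the three-term recursion defining $J(T_n,\lambda)$, establishing (\ref{distr_identity2}). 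The main technical obstacle is organising the independence carefully: one must justify that the Wiener--Hopf factors $(S^{(k)}_{\lambda+\gamma},I^{(k)}_{\lambda+\gamma})$ associated with successive $M$-inter-arrivals can be taken independent across $k$ and jointly independent of the sequences $(\beta_k)$ and $(\xi_k)$. This follows from the independence of $X$ from the other randomness combined with the strong Markov property at the $t_k$, after which the inductive argument from the proof of Theorem \ref{thm_main} transfers almost verbatim.
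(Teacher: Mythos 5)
Your proposal is correct and follows essentially the same route as the paper: both arguments rest on Poisson superposition/thinning to realise $\mathbf{g}(n,\lambda)$ as the $T_n$-th arrival of a rate-$(\lambda+\gamma)$ Poisson process with independent Bernoulli$(\lambda/(\lambda+\gamma))$ marks, then apply the Wiener--Hopf factorisation of $X$ over each exponential$(\lambda+\gamma)$ inter-arrival interval (appending $\xi_k$ only at unmarked arrivals) and conclude by induction exactly as in Theorem \ref{thm_main}. The only cosmetic difference is that the paper phrases the one-step analysis via $\tau_1\wedge\mathbf{e}_\lambda$ and then inducts, whereas you set up the full superposed process at once; the substance is identical.
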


\begin{pf} Consider a Poisson process with arrival rate $\lambda
+\gamma$ such that points are independently marked with probability
$\lambda/(\lambda+\gamma)$. Then recall that the Poisson Thinning
theorem tells us that the process of marked points is a Poisson process
with arrival rate $\lambda$. In particular, the
arrival time having index $T_1$ is exponentially distributed with rate
$\lambda$.

Suppose that $\tau_1$ is the first time that an arrival occurs in the
process $N$, in particular $\tau_1$ is exponentially distributed with
rate $\gamma$. Let $\mathbf{e}_\lambda$ be another independent and
exponentially distributed random variable, and fix $x\in\mathbb{R}$
and $y\geq0$. Then making use of the Wiener--Hopf decomposition,
\begin{eqnarray*}
 &&(x +Y_{\tau_1\wedge\mathbf{e}_\lambda},\max\{ y, x
+\overline{Y}_ {\tau_1\wedge\mathbf{e}_\lambda}\}) \\
&& \qquad =
\cases{\displaystyle
\bigl(x +S^{(1)}_{\lambda}+ I^{(1)}_{\lambda},   \max\bigl\{ y,   x
+S^{(1)}_\lambda\bigr\}\bigr) ,  \qquad   \mbox{if } \mathbf{e}_\lambda< \tau_1,\vspace*{2pt}\cr\displaystyle
\bigl(x+ S^{(1)}_{\gamma}+ I^{(1)}_{\gamma} + \xi_n ,  \max\bigl\{
x+S^{(1)}_{\gamma}+ I^{(1)}_{\gamma} + \xi_n,   y,  x +
S^{(1)}_\gamma\bigr\}\bigr) ,\cr
 \hspace*{161pt}\qquad   \mbox{if }   \tau_1\leq\mathbf{e}_\lambda.
}
\end{eqnarray*}
If we momentarily set $(x,y) = (V(0,\lambda), J(0,\lambda)) = (0,0)$,
then by the Poisson Thinning theorem it follows that $(Y_{\tau_1\wedge
\mathbf{e}_\lambda}, \overline{Y}_ {\tau_1\wedge\mathbf
{e}_\lambda})$ is equal in distribution to $(V(1,\lambda),
J(1,\lambda))$. Moreover, again by the Poisson Thinning theorem, $(Y_{
\mathbf{e}_\lambda}, \overline{Y}_ { \mathbf{e}_\lambda})$ is
equal in distribution to $(V(T_1,\lambda), J(T_1,\lambda))$. This
proves the theorem for the case $n=1$.

In the spirit of the proof of Theorem \ref{thm_main}, the proof for
$n\geq2$ can be established by an inductive argument. Indeed, if the
result is true for $n=k-1$ then it is true for $n=k$ by taking $(x,y) =
(V(k-1, \lambda), J(k-1, \lambda))$ then appealing to the lack of
memory property, stationary and independent increments of $Y$ and the
above analysis for the case that $n=1$. The details are left to the reader.
\end{pf}

\begin{remark}
A particular example where the use of the above theorem is of
pertinence is a linear Brownian motion plus an independent compound
Poisson process. This would include, for example, the so-called Kou model
from mathematical finance in which the jumps of the compound Poisson
process have a two-sided exponential distribution. In the case that $X$
is a linear Brownian motion, the quantities $\overline{X}_{\mathbf
{e}_1/\lambda}$ and $-\underline{X}_{\mathbf{e}_1/\lambda}$ are
both exponentially distributed with easily computed rates.
\end{remark}

\subsection{Approximate simulation of the law of $(X_t, \overline
{X}_t, \underline{X}_t)$}

Next, we consider the problem of sampling from the distribution of the
three random variables $(X_t, \overline{X}_t, \underline{X}_t)$. This
is also an important problem for applications making use of the
two-sided exit problem and, in particular, for pricing double barrier options.
The following slight modification of the Wiener--Hopf Monte Carlo
technique allows us to obtain two estimates for this triple of random variables,
which in many cases can be used to provide upper and lower bounds for
certain functionals of $(X_t, \overline{X}_t, \underline{X}_t)$.

\begin{theorem}\label{thm_3d_distr}
Given two sequences $\{S^{(n)}_\lambda\dvtx  n\ge1\}$ and $\{
I^{(n)}_\lambda\dvtx  n\ge1\}$ introduced in Theorem \ref{thm_main} we define
iteratively for $n\ge1$
\begin{eqnarray}\label{def_5_rand_vrls}
V(n,\lambda) &=& V(n-1,\lambda)+S^{(n)}_\lambda+ I^{(n)}_\lambda,
\nonumber\\
J(n, \lambda) &=& \max \bigl( J(n-1,\lambda), V(n-1,\lambda)+
S^{(n)}_\lambda \bigr),
\nonumber\\
K(n,\lambda) &=& \min \bigl( K(n-1,\lambda), V(n,\lambda)  \bigr),
\\
\tilde J(n, \lambda) &=& \max \bigl( \tilde J (n-1,\lambda),
V(n,\lambda)  \bigr), \nonumber\\
\tilde K(n,\lambda) &=& \min \bigl( \tilde K(n-1,\lambda),
V(n-1,\lambda)+ I^{(n)}_\lambda \bigr),\nonumber
\end{eqnarray}
where $V(0,\lambda)=J(0,\lambda)=K(0,\lambda)=\tilde J(0,\lambda
)=\tilde K(0,\lambda)=0$. Then for any bounded function $f(x,y,z)\dvtx  \r
^3 \mapsto\r$ which is increasing in
$z$-variable we have\looseness=1
\begin{eqnarray}\label{bias-1}
\e[ f(V(n,\lambda), J(n, \lambda) , K(n,\lambda)) ] &\ge& \e
\bigl[f\bigl(X_{\mathbf{g}(n,\lambda)},\overline{X}_{\mathbf{g}(n,\lambda
)},\underline{X}_{\mathbf{g}(n,\lambda)}\bigr)\bigr], \\\label{bias-2}
\e[ f(V(n,\lambda), \tilde K(n,\lambda), \tilde J(n, \lambda)) ]
&\le& \e\bigl[f\bigl(X_{\mathbf{g}(n,\lambda)},\underline{X}_{\mathbf
{g}(n,\lambda)},
\overline{X}_{\mathbf{g}(n,\lambda)}\bigr)\bigr].
\end{eqnarray}\looseness=0
\end{theorem}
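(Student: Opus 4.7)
The plan is to couple the iteration (\ref{def_5_rand_vrls}) directly to a single realisation of the L\'evy path $X$ sampled at the random epochs $\mathbf{g}(k,\lambda)$, $k=0,1,\ldots,n$, using two different realisations of the sequences $\{S^{(k)}_\lambda\}$ and $\{I^{(k)}_\lambda\}$. In each coupling, $V(n,\lambda)$ is matched pointwise to $X_{\mathbf{g}(n,\lambda)}$; one of the two ``extremes'' is identified almost surely with the true running extreme of the full path, while the other is forced to use only the skeleton values $\{V(k,\lambda)\}$, producing a one-sided error. Monotonicity of $f$ in its third argument will then convert this pathwise error into the claimed expectation inequality.

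For (\ref{bias-1}), I would introduce, over each sub-interval, the increment $\Delta X^{(k)}:=X_{\mathbf{g}(k,\lambda)}-X_{\mathbf{g}(k-1,\lambda)}$ and the shifted sub-interval supremum $\sup^{(k)} := \sup_{\mathbf{g}(k-1,\lambda)\le s\le \mathbf{g}(k,\lambda)} X_s - X_{\mathbf{g}(k-1,\lambda)}$. By the Wiener-Hopf factorisation on each independent exponential horizon $\mathbf{e}_k/\lambda$, combined with stationary and independent increments of $X$, the pairs $\bigl(\sup^{(k)},\,\Delta X^{(k)}-\sup^{(k)}\bigr)$ are independent across $k$, independent within each $k$, with marginals equal in distribution to $\overline{X}_{\mathbf{e}_1/\lambda}$ and $\underline{X}_{\mathbf{e}_1/\lambda}$ respectively. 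Setting $S^{(k)}_\lambda:=\sup^{(k)}$ and $I^{(k)}_\lambda:=\Delta X^{(k)}-\sup^{(k)}$ and telescoping, a straightforward induction in the spirit of the proof of Theorem \ref{thm_main} gives $V(n,\lambda)=X_{\mathbf{g}(n,\lambda)}$ and $J(n,\lambda)=\overline{X}_{\mathbf{g}(n,\lambda)}$ almost surely. On the other hand, $K(n,\lambda)$ is by construction the minimum of the endpoint values $\{V(k,\lambda):k\le n\}$, so trivially $K(n,\lambda)\ge\underline{X}_{\mathbf{g}(n,\lambda)}$ a.s.\ (the true path infimum may be attained in the interior of some sub-interval). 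Monotonicity of $f$ in its third argument, applied pathwise, and taking expectations then yields (\ref{bias-1}).

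For (\ref{bias-2}), I would switch to the dual coupling $I^{(k)}_\lambda:=\inf^{(k)}$ and $S^{(k)}_\lambda:=\Delta X^{(k)}-\inf^{(k)}$, where $\inf^{(k)}$ denotes the shifted sub-interval infimum. The dual Wiener-Hopf factorisation guarantees that these pairs again possess the i.i.d.\ structure and marginal distributions required in Theorem \ref{thm_main}. Telescoping yields $V(n,\lambda)=X_{\mathbf{g}(n,\lambda)}$ almost surely and, because $\tilde K(n,\lambda)$ accumulates the quantities $V(k-1,\lambda)+I^{(k)}_\lambda$ which under this coupling equal the true running infimum of $X$ on $[\mathbf{g}(k-1,\lambda),\mathbf{g}(k,\lambda)]$, one obtains $\tilde K(n,\lambda)=\underline{X}_{\mathbf{g}(n,\lambda)}$ almost surely. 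Dually, $\tilde J(n,\lambda)$ samples only the endpoint values and therefore $\tilde J(n,\lambda)\le\overline{X}_{\mathbf{g}(n,\lambda)}$ a.s. Monotonicity of $f$ in its third argument (which on the left is $\tilde J$ and on the right is $\overline{X}_{\mathbf{g}(n,\lambda)}$) followed by expectation yields (\ref{bias-2}). No real technical obstacle arises beyond correctly invoking the Wiener-Hopf factorisation on each exponential sub-interval under each of the two couplings; the conceptual observation that drives the result is simply that $K$ and $\tilde J$ only see the skeleton $\{V(k,\lambda)\}$ and therefore miss, respectively, the intra-interval troughs and peaks of the true path.
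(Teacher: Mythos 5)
Your proposal is correct and follows essentially the same route as the paper: the paper's (very terse) proof likewise rests on the observation that, under the path coupling implicit in Theorem \ref{thm_main}, one has $J(n,\lambda)=\overline{X}_{\mathbf{g}(n,\lambda)}$ while $K(n,\lambda)=\min\{X_{\mathbf{g}(k,\lambda)}:k\le n\}\ge \underline{X}_{\mathbf{g}(n,\lambda)}$, and dually $\tilde K(n,\lambda)=\underline{X}_{\mathbf{g}(n,\lambda)}$ while $\tilde J(n,\lambda)\le\overline{X}_{\mathbf{g}(n,\lambda)}$, after which monotonicity of $f$ in the third argument finishes the job. You merely make explicit the two couplings (supremum-first versus infimum-first on each exponential sub-interval) that the paper leaves implicit, which is a faithful and correct filling-in of the details.
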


\begin{pf} From Theorem \ref{thm_main}, we know that $(V(n,\lambda
), J(n,\lambda))$ has the same distribution as $(X_{\mathbf
{g}(n,\lambda)},\overline{X}_{\mathbf{g}(n,\lambda)})$, and, for
each $n\geq1$, $K(n,\lambda) =
\min\{X_{\mathbf{g}(k,\lambda)}\dvtx\allowbreak
k=0,1,\ldots,  n\}\geq\underline{X}_{\mathbf{g}(n,\lambda)}$. The
inequality in (\ref{bias-1}) now follows. The equality in (\ref
{bias-2}) is the result of a similar argument where now, for each
$n\geq1$, $\tilde{K}(n, \lambda) = \underline{X}_{\mathbf
{g}(n,\lambda)}$ and $\tilde{J}(n,\lambda) = \max\{X_{\mathbf
{g}(k, \lambda)}\dvtx  k = 0,1,\ldots , n\}\leq\overline{X}_{\mathbf
{g}(n,\lambda)}$.\vadjust{\goodbreak}
\end{pf}

Theorem \ref{thm_3d_distr} can be understood in the following sense.
Both triples of random variables $(V(n,\lambda), J(n,\lambda),
K(n,\lambda))$ and
$(V(n,\lambda), \tilde J(n,\lambda), \tilde K(n,\lambda))$ can be
considered as estimates for $(X_{\mathbf{g}(n,\lambda)},\overline
{X}_{\mathbf{g}(n,\lambda)},\underline{X}_{\mathbf{g}(n,\lambda
)})$, where in the first case
$K(n,\lambda)$ has a positive bias and in the second case $\tilde
J(n,\lambda)$ has a negative bias. An example of this is handled in
the next section.

\section{Numerical results}

In this section, we present numerical results. We perform computations
for a process $X_t$ in the $\beta$-family with parameters
\[
(a,\sigma, \alpha_1,\beta_1,\lambda_1, c_1, \alpha_2,\beta
_2,\lambda_2, c_2) = (a,\sigma, 1, 1.5, 1.5, 1, 1, 1.5, 1.5 , 1),
\]
where the linear drift $a$ is chosen such that $\Psi(-\mathrm{i})=-r$
with $r=0.05$, for no other reason that this is a risk neutral setting
which makes the process
$\{\exp(X_t-rt) \dvtx  t \geq0\}$ a martingale. We are interested in two
parameter sets. Set 1 has $\sigma=0.4$ and Set~2 has $\sigma=0$. Note
that both parameter
sets give us proceses with jumps of infinite activity but of bounded
variation, but due to the presence of Gaussian component the process
$X_t$ has unbounded variation
in the case of parameter Set 1.

As the first example, we compare computations of the joint density of
$(\overline{X}_1,\overline{X}_1-{X}_1)$ for the parameter Set 1.
Our first method is based on the following Fourier inversion technique.
As in the proof of Theorem \ref{thm_main}, we use the fact
that $\overline{X}_{\mathbf{e}_1/\lambda}$ and $X_{\mathbf
{e}_1/\lambda} -\overline{X}_{\mathbf{e}_1/\lambda}$ are independent,
and the latter is equal in distribution to $\underline{X}_{\mathbf
{e}_1/\lambda}$, to write
\begin{eqnarray*}
\p(\overline{X}_{\mathbf{e}_1/\lambda} \in\d x) \p(-\underline
{X}_{\mathbf{e}_1/\lambda} \in\d y) &=&
\p(\overline{X}_{\mathbf{e}_1/\lambda} \in\d x,   \overline
{X}_{\mathbf{e}_1/\lambda}-{X}_{\mathbf{e}_1/\lambda} \in\d y)\\
&=& \lambda\int_{\r^+} e^{-\lambda t} \p(\overline{X}_t
\in\d x,\overline{X}_t-{X}_t \in\d y) \,\d t.
\end{eqnarray*}
Writing down the inverse Laplace transform, we obtain
\begin{eqnarray}\label{eq_Fourier}
&&\p(\overline{X}_t \in\d x,\overline{X}_t-{X}_t \in\d y)\nonumber
\\[-8pt]
\\[-8pt]&& \qquad =\frac
{1}{2\pi\i} \int_{\lambda_0+\i\r}
\p(\overline{X}_{\mathbf{e}_1/\lambda} \in\d x) \p(-\underline
{X}_{\mathbf{e}_1/\lambda} \in\d y) \lambda^{-1} e^{\lambda t} \,\d
\lambda,
\nonumber
\end{eqnarray}
where $\lambda_0$ is any positive number. The values of analytical
continuation of $\p(\overline{X}_{\mathbf{e}_1/\lambda} \in\d x)$
for complex values of $\lambda$ can be computed efficiently using
technique described in \cite{Kuz}. Our numerical results indicate that
the integral in (\ref{eq_Fourier})
can be computed very precisely, provided that we use a large number of
discretization points in $\lambda$ space coupled with Filon-type
method to compute this Fourier type integral. Thus, first we compute
the joint density of $(\overline{X}_1,\overline{X}_1-{X}_1)$ using
(\ref{eq_Fourier}) and take it as a benchmark, which we use later
to compare
the Wiener--Hopf Monte Carlo method and the classical Monte Carlo
approach. For both of these methods, we fix the number of simulations
$M=10^7$ and the number of
time steps $N \in\{20,50,100\}$. For fair comparison, we use $2N$ time
steps for the classical Monte Carlo, as Wiener--Hopf Monte Carlo method\vadjust{\goodbreak}
with $N$ time steps
requires simulation of $2N$ random variables $\{S^{(j)}_{\lambda
},I^{(j)}_{\lambda} \dvtx  j=1,2,\ldots ,N\}$. All the code was written in
Fortran and the computations were performed on a
standard laptop (Intel Core 2 Duo~2.5 GHz processor and 3 GB of RAM).

%
\begin{figure}[t!]

\includegraphics{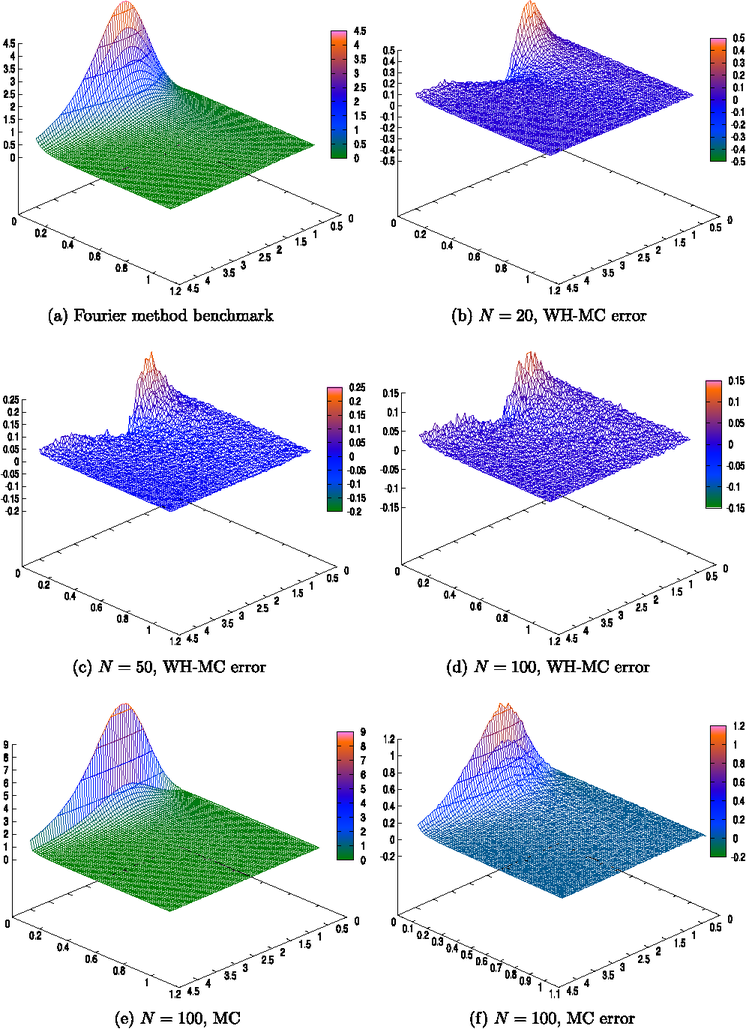}

\caption{Computing the joint density of $(\overline{X}_1,\overline
{X}_1-X_1)$ for parameter Set 1.
Here $\overline{X}_1 \in[0,1]$ and $\overline{X}_1-X_1 \in[0,4]$.}
\label{fig_2D_density}
\end{figure}

Figure \ref{fig_2D_density} presents the results of our computations.
In Figure \ref{fig_2D_density}(a), we show our benchmark, a surface
plot of the joint probability density
function of $(\overline{X}_1,\overline{X}_1-{X}_1)$ produced using
Fourier method (\ref{eq_Fourier}), which takes around 40--60 seconds
to compute.
Figure \ref{fig_2D_density}(b)--(d) show the difference between the benchmark and the
Wiener--Hopf Monte Carlo result
as the number of time steps $N$ increases from 20 to 50 to 100. The
computations take around 7 seconds for $N=100$, and 99\% of this time
is actually spent performing the Monte Carlo algorithm, as the
precomputations of the roots $\zeta_n^{\pm}$ and the law of
$I_{\lambda}, S_{\lambda}$ take less than one tenth of a second.
Figure \ref{fig_2D_density}(e) shows the result produced by the
classical Monte Carlo method with $N=100$ (which translates into 200
random walk steps according to our previous convention); this
computation takes around 10--15 seconds since here
we also need to compute the law of $X_{1/N}$, which is done using
inverse Fourier transform of the characteristic function of $X_t$ given
in (\ref{eq1}).
Finally, Figure \ref{fig_2D_density}(f) shows the difference between
the Monte Carlo result and our benchmark.

The results illustrate that in this particular example the Wiener--Hopf
Monte Carlo technique is superior to the classical Monte Carlo
approach. It gives a much more precise result, it requires less
computational time, is more straightforward to programme and does not
suffer from some the issues that plague the Monte Carlo approach, such
as the atom in distribution
of $\overline{X}_1$ at zero, which is clearly visible in Figure \ref
{fig_2D_density}(e).

%
\begin{figure}[t]

\includegraphics{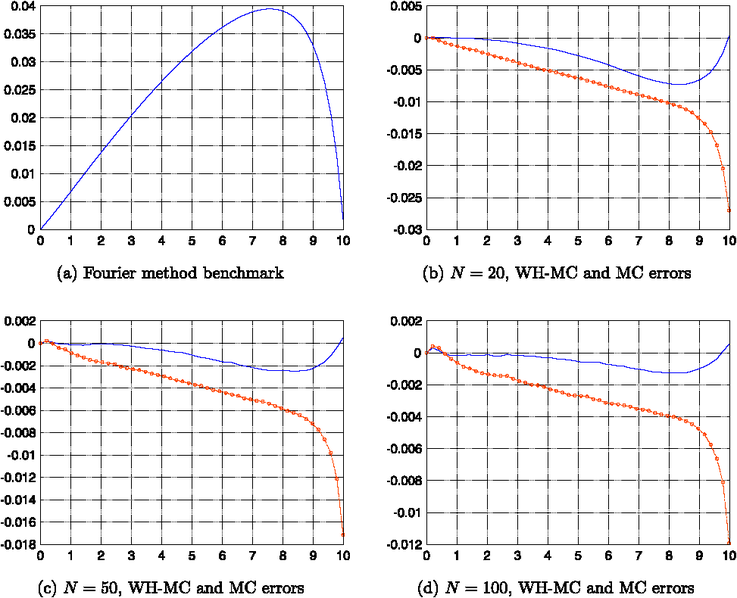}

\caption{Computing the price of up-and-out barrier option for
parameter Set 1. In figures \textup{(b)--(d)} the graph of WH-MC error
is
solid line, the graph of MC error
is   line with circles.}
\label{fig_barrier1}
\vspace*{2pt}
\end{figure}

Next, we consider the problem of pricing up-and-out barrier call option
with maturity equal to one, which is equivalent to computing the
following expectation:
%
\begin{equation}\label{barrier1}
\pi^{\mathrm{uo}}(s) = e^{-r} \mathbb{E}  \bigl[ (se^{X_1}-K)^+
\mathbf{1}_{\{s\exp(\overline{X}_1)< b \}}  \bigr].
\end{equation}
Here $s \in[0,b]$ is the initial stock price. We fix the strike price
$K=5$, the barrier level $b=10$. The numerical results for parameter
Set 1
are presented in Figure \ref{fig_barrier1}. Figure \ref
{fig_barrier1}(a) shows the graph of $\pi^{\mathrm{uo}}(s)$ as a
function of~$s$ produced with
Fourier method similar to (\ref{eq_Fourier}), which we again use as a~%
benchmark. Figure \ref{fig_barrier1}(b)--(d) show the difference between the benchmark and
results produced by Wiener--Hopf Monte Carlo (blue solid line) and
classical Monte Carlo (red line with circles) for $N\in\{20,50,100\}$.
Again we see
that Wiener--Hopf Monte Carlo method gives a better accuracy, especially
when the initial stock price level $s$ is close to the barrier $b$, as
in this case the Monte Carlo
approach produces an artificial atom in the distribution of~$\overline{X}_1$ at
zero which creates a large error.

%
\begin{figure}

\includegraphics{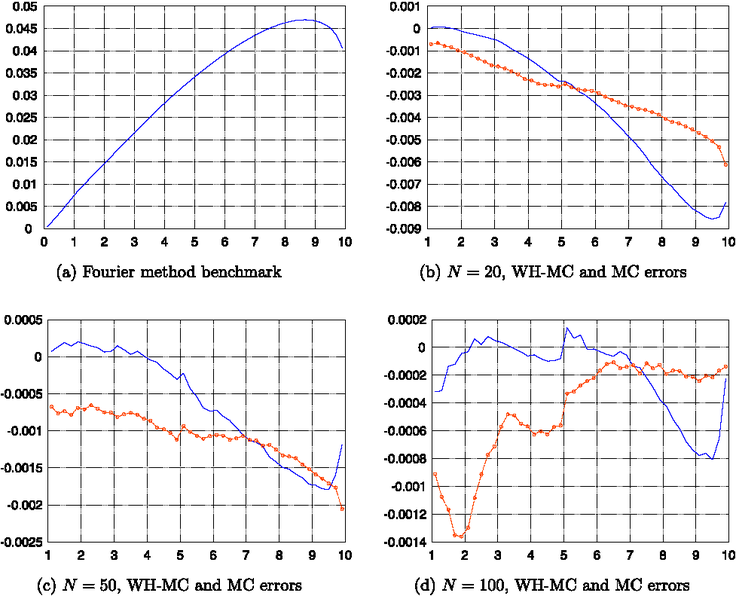}

\caption{Computing the price of up-and-out barrier option for
parameter Set 2. In figures~\textup{(b)--(d)} the graph of WH-MC error is
solid line, the graph of MC error
is   line with circles.}
\label{fig_barrier2}
\end{figure}

Figure \ref{fig_barrier2} shows corresponding numerical results for
parameter Set 2. In this case, we have an
interesting phenomenon of a discontinuity in $\pi^{\mathrm
{uo}}(s)$ at the boundary $b$.
The discontinuity should be there and occurs due to the fact\vadjust{\goodbreak} that, for
those particular parameter choices, there is irregularity of the upper
half line.
Irregularity of the upper half line is equivalent to there being an
atom at zero in the distribution of $\overline{X}_{t}$ for any $t>0$
(also at independent and exponentially distributed random times). We
see from the results presented in Figures \ref{fig_barrier1} and \ref
{fig_barrier2} that Wiener--Hopf Monte Carlo
method correctly captures this phenomenon; the atom at zero is produced
if and only if the upper half line is irregular, while the classical
Monte Carlo approach
always generates an atom. Also, analyzing Figure \ref
{fig_barrier2}(b)--(d), we
see that in this case classical
Monte Carlo algorithm is also doing a good job and it is hard to find a
winner. This is not surprising, as in the case of parameter Set 2 the
process $X_t$ has bounded
variation, thus the bias produced in monitoring for supremum only at
discrete times is smaller than in the case of process of unbounded
variation.

Finally, we give an example of how one can use Theorem \ref
{thm_3d_distr} to produce upper/lower bounds for the price of the
double no-touch barrier call option
%
\begin{equation}\label{barrier_dnt}
\pi^{\mathrm{dnt}}(s) = e^{-r} \mathbb{E}  \bigl[ (se^{X_1}-K)^+
\mathbf{1}_{\{s\exp(\overline{X}_1)< \overline{b}  ;   s\exp
(\underline{X}_1)>
\underline{b} \}} \bigr].
\end{equation}
First, we use identity $\mathbf{1}_{\{ s\exp(\underline{X}_1)>
\underline{b} \}}=1-\mathbf{1}_{\{s\exp(\underline{X}_1)<
\underline{b} \}}$ and obtain
\[
\pi^{\mathrm{dnt}}(s) = \pi^{\mathrm{uo}}(s) - e^{-r}
\mathbb{E}  \bigl[ (se^{X_1}-K)^+ \mathbf{1}_{\{s\exp(\overline
{X}_1)< \overline{b}  ;   s\exp(\underline{X}_1)< \underline{b} \}
} \bigr].
\]
Function $f(x,y,z)=-(se^{x}-K)^+ \mathbf{1}_{\{s\exp(y)< \overline
{b}  ;   s\exp(z)< \underline{b} \}}$ is increasing in both
variables $y$ and $z$, thus
using Theorem \ref{thm_3d_distr} we find that
\begin{eqnarray*}
\pi_1^{\mathrm{dnt}}(s) &=&
\pi^{\mathrm{uo}}(s)
- e^{-r} \e \bigl[\bigl(se^{V(n,n)}-K\bigr)^+ \mathbf{1}_{\{s\exp(\tilde
J(n,n))<\overline{b}   ;   s\exp(\tilde K(n,n))<\underline{b} \}
} \bigr], \\
\pi_2^{\mathrm{dnt}}(s) &=&\pi^{\mathrm{uo}}(s)
- e^{-r} \e\bigl [ \bigl(se^{V(n,n)}-K\bigr)^+ \mathbf{1}_{\{s\exp
(J(n,n))<\overline{b}   ;   s\exp(K(n,n))<\underline{b} \}} \bigr]
\end{eqnarray*}
are the lower/upper bounds for $\pi^{\mathrm{dnt}}(s)$. Figure
\ref{barrier_dnt1} illustrates this algorithm for parameter Set 1, the other
parameters being fixed at $K=5$, $\underline{b}=3$, $\overline{b}=10$
and the number of time steps $N=200$ (400 for the classical
Monte Carlo). We see that the Monte Carlo approach gives a price which
is almost always larger than the upper bound produced by the
Wiener--Hopf Monte Carlo algorithm. This is not surprising, as in the case
of Monte Carlo approach we would have positive (negative) bias in the
estimate of infimum (supremum), and given that the payoff of the double
no-touch barrier option
is increasing in infimum and decreasing in supremum this amplifies the bias.

%
\begin{figure}

\includegraphics{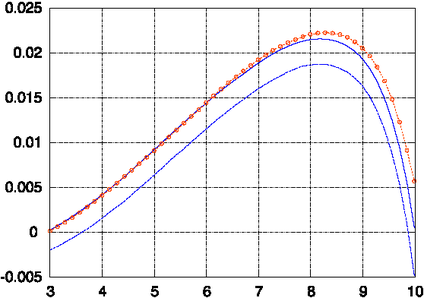}

\caption{Computing the price of the double no-touch barrier option for
parameter Set 1. The   solid lines represent the upper/lower bounds
produced by WH-MC method, the   line with circles represents the MC result.}
\label{barrier_dnt1}
\end{figure}

\section*{Acknowledgments} A. E. Kyprianou and K. van Schaik would also like to thank Alex
Cox for useful discussions.  All four
authors are grateful to two anonymous referees for their suggestions
which helped improved the presentation of this paper.

%

\printaddresses

\end{document}